\newcommand{\ol}[1]{ \overline{ #1 } }
\newcommand{\R}{\mathbb{R}}
\newcommand{\divv}{ \nabla \!\! \cdot \!}
\newcommand{\del}[2]{ \partial^{#1}_{#2} }
\newtheorem{theorem}{Theorem}[section]
\newtheorem{lemma}{Lemma}[section]
\newcommand{\rot}{} 
\begin{document}
\selectlanguage{english}
\thispagestyle{empty}
\numberwithin{equation}{section}
\pagestyle{myheadings}
\markboth{\small H. Freist\"uhler, M. Kotschote}{\small 
Flow of compressible two-phase fluids}

\title{The Blesgen and Lowengrub-Truskinovsky Descriptions 
of Two-Phase Compressible Fluid Flow:\\
Interstitial Working and a Reduction to\\ Korteweg Theory} 
\author{\it Heinrich Freist\"uhler\ ~ {\rm and }
Matthias Kotschote\thanks{both Department of Mathematics, 
University of Konstanz, 78457 Konstanz, Germany}}

\date{November 26, 2017}
\maketitle

\selectlanguage{english}

\section{Introduction}
Formulated by Blesgen \cite{B1} and Lowengrub and Truskinovsky \cite{LT}, 
the `Navier-Stokes-Allen-Cahn'  equations and the `Navier-Stokes-Cahn-Hilliard' 
equations describe diphasic fluid flow, combining 
the conservation laws for mass, momentum, and energy with a balance law for the
phases, which governs the concentration of one, and thus the other, phase  as an
order parameter. By contrast, the `Navier-Stokes-Korteweg' theory for the 
dynamics of a one-phase capillary fluid, given by Dunn and Serrin \cite{DS} following 
Korteweg  \cite{Korteweg} and Slemrod \cite{S}, 
uses only the three said conservation laws and has the mass density itself
as its order parameter.\footnote{We will henceforth abbreviate  
Navier-Stokes-Allen-Cahn, Navier-Stokes-Cahn-Hilliard, and Navier-Stokes-Korteweg 
as NSAC,NSCH, and NSK.} 

In \cite{FKARMA}, the authors considered the derivation of the
NSAC, the NSCH, and the NSK equations, and  showed that in the case that the 
two phase are incompressible with different specific volumes,
both NSAC and NSCH reduce to versions of NSK. However, as we realized only after 
that article was published, our assumptions in \cite{FKARMA} tacitly correspond to neglecting
the so-called microforces \cite{GPV}. While the argumentation in \cite{FKARMA} seems mathematically
consistent, the microforces are a physical reality, and the purpose of the present note 
is to show that the reduction of NSAC and NSCH to NSK still holds when one does take them 
into account.

Also in NSAC and NSCH, the microforces do what Dunn and Serrin named 
\emph{interstitial work}. In Section 2 we study the role that this particular effect 
plays in the entropy production along particle paths. Section 3 serves to  prove the reduction
property of NSAC and NSCH \emph{with}\ interstitial work. Section 4 
discusses the \emph{objectivity} of the three models briefly. In the sequel, we use
notation from \cite{FKARMA} and the results of Section 1 of that paper without defining /
stating them again.

\section{Interstitial work and entropy production} \label{sec:models}

\subsection{The Navier-Stokes-Korteweg system}
In this subsection we briefly recall entropy production in the Navier-Stokes-Korteweg 
system. This system, now exactly in its form derived by Dunn and Serrin in \cite{DS}, reads
\begin{equation}   \label{eq:NSK}
\begin{aligned}
\del{}{t} \rho + \divv( \rho u ) & = 0,
\\
\del{}{t} ( \rho u ) + \divv ( \rho u \otimes u  
- \bar{\mathbf{T}})& = 0,
\\
\del{}{t} \bar{\mathcal{E}} 
+ \divv ((\bar{\mathcal{E}}\mathbf{I}-\bar{\mathbf{T}}) u \rot{+\divv \bar w}) 
- \divv ( \beta \nabla \theta )& = 0,
\\
\end{aligned}
\end{equation}
with
\begin{equation*}
\bar{\mathcal{E}}\equiv \rho (\bar E+\frac12|u|^2) 
\quad\hbox{and}\quad 
\bar{\mathbf{T}}
\equiv
-p\mathbf{I}+\mathbf{K} + \mathbf{S}.  
\end{equation*}
The fluid is specified by its Helmholtz energy 
\begin{equation}\label{HelmholtzK}
\bar F(\rho,\theta,\nabla\rho)=\check{\bar F}(\rho,\theta,|\nabla\rho|^2/2),
\end{equation}
from which its internal energy $\bar E$ derives via the Legendre transform
$$
\bar E(\rho,s,\nabla\rho)\equiv \bar F(\rho,\theta,\nabla\rho)
+\theta s,
$$
with temperature $\theta$ and specific entropy 
$
s=-\partial_\theta F 
$
as dual variables.
The Korteweg tensor \rot{and the interstitial work flux} are given by 
\begin{equation}\label{eq:K}
\rot{\mathbf{K}
=
\bigg [\rho \divv \big( \partial_{\nabla\rho}({\rho} \bar{F} ) \big) \mathbf{I}
-
\nabla \rho \otimes \partial_{\nabla\rho }(\rho\bar F) \bigg],\quad
\bar w= \kappa\rho(\divv u)\nabla\rho.}
\end{equation}
with $\kappa$ given by
\begin{equation}
\rho\del {}{\nabla\rho}\bar F=\kappa\nabla\rho,
\end{equation}
and the reasoning in \cite{DS}\footnote{which can technically be expressed as a simple 
computation very similar 
to one in Sec.\ 2.3 of \cite{FKARMA}} yields the 
entropy balance 
\begin{equation}\label{eq:entropy}
\del{}{t} (\rho s) + \divv(\rho s u) = \divv \Sigma +\sigma
\end{equation}
with
\begin{equation}\label{eq:Sigma}
\Sigma\equiv
\beta\theta \nabla \theta
\end{equation}
and
\begin{equation}\label{eq:sigma}
\sigma 
\equiv
\frac{1}{\theta} \bigg( \eta(Du)^s:(Du)^s + \zeta (\divv u)^2 \bigg) 
+
\frac{\beta}{\theta^2} |\nabla\theta|^2.
\end{equation}
\subsection{The Navier-Stokes-phase-field models} \label{sec:nsac}
Before specializing to Navier-Stokes-Allen-Cahn and Navier-Stokes-Cahn-Hilliard,
we show some properties which both systems share.  We start from their common form
\begin{equation}   \label{phfm}
\begin{aligned}
\del{}{t} \rho + \divv( \rho u ) & = 0,
\\
\del{}{t} ( \rho u ) + \divv ( \rho u \otimes u  
- \mathbf{T})& = 0,
\\
\del{}{t} \mathcal{E} 
+ \divv ((\mathcal{E}\mathbf{I}-\mathbf{T}) u\rot{+w} ) 
- \divv ( \beta \nabla \theta )& = 0,
\\
\del{}{t} (\rho\chi) + \divv(\rho\chi u) 
-\rho j & = 0 
\end{aligned}
\end{equation}
in which
\begin{equation*}
\mathcal{E}\equiv \rho (E+\frac12|u|^2),
\quad 
\mathbf{T}
\equiv
-p\mathbf{I}+\mathbf{C} + \mathbf{S},
\quad \rot{w}  
\end{equation*}
are the total energy, the total Cauchy stress, \rot{and 
the interstitial work flux}.
We specify the fluid by its Gibbs energy $G$, i.e., the internal energy is
\begin{equation*}
E(\tau,s,\chi,\nabla\chi)\equiv G(p,\theta,\chi,\nabla\chi)-p\tau+\theta S,   
\end{equation*}
with temperature $\theta$ and specific entropy 
$
s=-\partial_\theta G 
$
as well as pressure $p$ and specific volume 
$
\tau=1/\rho=\partial_p G
$
as pairs of conjugate variables,
and the Ericksen tensor is 
\begin{equation*}\label{eq:Ericksentensor}
 \mathbf{C}
=-\rho\nabla\chi\otimes
\frac{\partial G}{\partial\nabla\chi}.
\end{equation*}
We will also use the generalized chemical potential 
\begin{align*}
\mu \equiv 
\del{}{\chi} G
- \frac1\rho\divv \bigg(\rho \del{}{\nabla \chi}  G \bigg),
\end{align*}

\begin{lemma}
For solutions to \eqref{phfm},
the entropy production along particle paths satisfies 
\begin{equation}\label{eq:dotsphasefield}
\rho\theta\dot s
=\mathbf{S}  : Du
+
\divv ( \beta \nabla \theta )
\rot{-\divv w}
 -\rho (G_{\nabla\chi} \cdot \nabla j + G_\chi j ).
\end{equation}
\end{lemma}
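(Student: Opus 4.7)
The plan is to extract an internal-energy balance from the total-energy equation in \eqref{phfm}, combine it with the Gibbs relation coming from the Legendre transform defining $E$, and then use the transport equation for $\nabla\chi$ to cancel the Ericksen contribution $\mathbf{C}:Du$ against the convective part of $\dot{\nabla\chi}$.

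First, I would take the scalar product of the momentum equation with $u$ and apply continuity to obtain the kinetic energy identity $\rho\,\dot{(|u|^2/2)} = u\cdot\divv\mathbf{T}$. The total-energy equation can, using continuity, be rewritten as
\[
\rho\dot E + \rho\,\dot{(|u|^2/2)} = \divv(\mathbf{T}u) - \divv w + \divv(\beta\nabla\theta),
\]
and the algebraic identity $\divv(\mathbf{T}u) = u\cdot\divv\mathbf{T} + \mathbf{T}:Du$ then yields the internal-energy balance $\rho\dot E = \mathbf{T}:Du - \divv w + \divv(\beta\nabla\theta)$. Decomposing $\mathbf{T}=-p\mathbf{I}+\mathbf{C}+\mathbf{S}$ produces the isolated pressure contribution $-p\,\divv u$.

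Second, since $E = G - p\tau + \theta s$ with $\tau=\partial_p G$ and $s=-\partial_\theta G$, the Gibbs relation reads $dE = -p\,d\tau + \theta\,ds + G_\chi\,d\chi + G_{\nabla\chi}\cdot d\nabla\chi$, whence
\[
\rho\dot E = -p\rho\dot\tau + \rho\theta\dot s + \rho G_\chi\dot\chi + \rho G_{\nabla\chi}\cdot\dot{\nabla\chi}.
\]
Continuity yields $\rho\dot\tau = \divv u$ and the phase equation gives $\dot\chi = j$, so equating the two expressions for $\rho\dot E$ and cancelling the $-p\,\divv u$ terms produces
\[
\rho\theta\dot s = \mathbf{C}:Du + \mathbf{S}:Du + \divv(\beta\nabla\theta) - \divv w - \rho G_\chi j - \rho G_{\nabla\chi}\cdot\dot{\nabla\chi}.
\]

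The decisive step is to rewrite $\dot{\nabla\chi}$. Taking the spatial gradient of $\chi_t + u\cdot\nabla\chi = j$ and rearranging yields the commutator identity $\dot{\nabla\chi} = \nabla j - (Du)^{\!\top}\nabla\chi$, so
\[
-\rho G_{\nabla\chi}\cdot\dot{\nabla\chi} = -\rho G_{\nabla\chi}\cdot\nabla j + \rho G_{\nabla\chi}\cdot[(Du)^{\!\top}\nabla\chi].
\]
The second term is exactly $-\mathbf{C}:Du$: by the definition $\mathbf{C}=-\rho\nabla\chi\otimes G_{\nabla\chi}$ and the elementary identity $(a\otimes b):M = b\cdot(M^{\!\top}a)$, one has $\mathbf{C}:Du = -\rho\,G_{\nabla\chi}\cdot[(Du)^{\!\top}\nabla\chi]$. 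Hence $\mathbf{C}:Du$ cancels and \eqref{eq:dotsphasefield} follows. The only subtlety is bookkeeping: one must track the transpose convention consistently in $\mathbf{C}:Du$ and in $\nabla(u\cdot\nabla\chi)$ so that the Ericksen term exactly undoes the convective part of $\dot{\nabla\chi}$; once this is done, the cancellation is purely algebraic and no further dissipation hypotheses intervene.
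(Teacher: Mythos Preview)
Your argument is correct and is precisely the computation the paper is pointing to when it says ``shown in exactly the same way as equation (2.7) in \cite{FKARMA}'': derive the internal-energy balance from total energy minus kinetic energy, invoke the Gibbs relation for $\dot E$, and use the transport identity $\dot{\nabla\chi}=\nabla j-(Du)^{\!\top}\nabla\chi$ so that the Ericksen term $\mathbf{C}:Du$ cancels the convective part of $\rho G_{\nabla\chi}\cdot\dot{\nabla\chi}$. Your explicit acknowledgment of the transpose convention in $\mathbf{C}:Du$ versus $\nabla(u\cdot\nabla\chi)$ is exactly the bookkeeping point on which the cancellation hinges.
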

\begin{proof}
This is shown in exactly the same way as equation (2.7) in \cite{FKARMA}.
\end{proof}
\begin{lemma}
With 
\begin{equation}
\label{def:w}
w=-\rho j \del{}{\nabla \chi}G,
\end{equation}
equation \eqref{eq:dotsphasefield} reads
\begin{equation}
\rho\theta\dot s=
\mathbf{S}  : Du
+
\divv ( \beta \nabla \theta )
-\rho j\mu. 
\end{equation}
\end{lemma}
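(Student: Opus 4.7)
The plan is to start from equation \eqref{eq:dotsphasefield} of the previous lemma and simply plug in the prescribed choice of $w$, then reorganize using the definition of $\mu$. The only terms of \eqref{eq:dotsphasefield} that change under this substitution are $-\divv w$ and $-\rho G_{\nabla\chi}\cdot\nabla j$, so the goal is to show that their sum equals $\rho j G_\chi -\rho j\mu$, after which the explicit $-\rho G_\chi j$ term on the right of \eqref{eq:dotsphasefield} is cancelled by the new $+\rho j G_\chi$ contribution, leaving exactly $-\rho j\mu$.

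First I would compute $\divv w$ by the product rule for a scalar times a vector field, obtaining
\begin{equation*}
-\divv w=\nabla(\rho j)\cdot G_{\nabla\chi}+\rho j\,\divv G_{\nabla\chi}
=j\,\nabla\rho\cdot G_{\nabla\chi}+\rho\,\nabla j\cdot G_{\nabla\chi}+\rho j\,\divv G_{\nabla\chi}.
\end{equation*}
The middle term on the right is precisely $+\rho G_{\nabla\chi}\cdot\nabla j$, which cancels the term $-\rho G_{\nabla\chi}\cdot\nabla j$ appearing in \eqref{eq:dotsphasefield}. The remaining two contributions combine into $j\,\divv(\rho G_{\nabla\chi})$, using $\divv(\rho G_{\nabla\chi})=\nabla\rho\cdot G_{\nabla\chi}+\rho\,\divv G_{\nabla\chi}$.

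Next I would invoke the definition
\begin{equation*}
\mu=G_\chi-\frac{1}{\rho}\divv\!\bigg(\rho\,\del{}{\nabla\chi}G\bigg),
\end{equation*}
rewritten as $\divv(\rho G_{\nabla\chi})=\rho(G_\chi-\mu)$, to convert the surviving term to $\rho j G_\chi-\rho j\mu$. Adding this to the unchanged terms of \eqref{eq:dotsphasefield} and noting the cancellation with $-\rho G_\chi j$ yields the claimed identity. There is no real obstacle here; the only thing to watch is keeping the product rule clean and correctly identifying which piece of $-\divv w$ plays the role of the $\nabla j$ term and which piece is assembled via the definition of $\mu$.
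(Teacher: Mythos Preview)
Your proof is correct and follows exactly the same route as the paper: both compute $-\divv w-\rho(G_{\nabla\chi}\cdot\nabla j+G_\chi j)=j\,\divv(\rho G_{\nabla\chi})-\rho j G_\chi$ and then invoke the definition of $\mu$ to rewrite this as $-\rho j\mu$. The paper compresses the calculation into a single displayed line, whereas you spell out the product-rule bookkeeping, but the argument is identical.
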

\begin{proof}
One computes
 $$
-\divv w
-
\rho (G_{\nabla\chi} \cdot \nabla j + G_\chi j )
=
j(-\rho G_\chi+\divv(\rho G_{\nabla\chi}))
$$
\end{proof}
The next result distinguishes between NSAC and NSCH.
\begin{lemma} Assume \eqref{def:w}.\\
(i) With 
\begin{equation}\label{jAC}
j=j_{AC}=-\frac\mu\epsilon,  
\end{equation}
the entropy production on solutions of \eqref{phfm} is given by \eqref{eq:entropy}
with
\begin{equation}
\Sigma=\divv \bigg(\frac\beta\theta \nabla \theta \bigg)
\end{equation}
and 
\begin{equation}\label{eq:sigmaAC}
\sigma=\frac1\theta\mathbf{S}  : Du
+
\frac\beta{\theta^2}|\nabla\theta|^2
+
\frac\epsilon\theta\rho j^2.
\end{equation}
(ii) With 
\begin{equation}\label{jCH}
j=j_{CH}=\frac1\rho\divv{\mathcal J},\quad {\mathcal J}=\gamma\nabla\bigg(\frac{\mu}{\theta}\bigg), 
\end{equation}
the entropy production  on solutions of \eqref{phfm} 
is given by \eqref{eq:entropy} with 
\begin{equation}\label{eq:SigmaCH}
 \Sigma
= 
\frac\beta\theta \nabla \theta 
-\gamma\frac\mu\theta\nabla\bigg(\frac\mu\theta\bigg)
\end{equation}
and  
\begin{equation}\label{eq:sigmaCH}
\sigma= 
\frac1\theta
\mathbf{S}  : Du
+
\frac\beta{\theta^2}|\nabla\theta|^2
+\gamma\bigg| \nabla\bigg(\frac\mu\theta\bigg)  \bigg|^2.
\end{equation}
\end{lemma}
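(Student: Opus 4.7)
My plan is to start from the identity already established in the preceding lemma,
$$
\rho\theta\dot s=\mathbf{S}:Du+\divv(\beta\nabla\theta)-\rho j\mu,
$$
and massage its right-hand side, separately for each choice of $j$, into the canonical flux-plus-production form on the right of \eqref{eq:entropy}. The first, common, step is to divide by $\theta$ and use mass conservation, which turns the left-hand side into $\partial_t(\rho s)+\divv(\rho s u)$. The heat contribution is then split via the elementary identity
$$
\frac1\theta\divv(\beta\nabla\theta)
=\divv\!\left(\frac{\beta}{\theta}\nabla\theta\right)+\frac{\beta}{\theta^2}|\nabla\theta|^2,
$$
which already delivers the Fourier piece of $\Sigma$ and the Fourier piece of $\sigma$ in both (i) and (ii). The stress term $\mathbf{S}:Du/\theta$ is clearly a pointwise production, so the only term that distinguishes the two models is the residual $-\rho j\mu/\theta$.

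For part (i), I substitute $j_{AC}=-\mu/\epsilon$ directly: the residual becomes $-\rho j\mu/\theta=\epsilon\rho j^2/\theta$, which is manifestly a (nonnegative) production term and contributes nothing to $\Sigma$. This yields \eqref{eq:sigmaAC} and $\Sigma=(\beta/\theta)\nabla\theta$. For part (ii), I substitute $j_{CH}=(1/\rho)\divv\mathcal{J}$ so that the residual reads $-(\mu/\theta)\,\divv\mathcal{J}$. Applying the product rule
$$
\divv\!\left(\frac{\mu}{\theta}\mathcal{J}\right)=\frac{\mu}{\theta}\divv\mathcal{J}+\mathcal{J}\cdot\nabla\!\left(\frac{\mu}{\theta}\right)
$$
to isolate a divergence, the residual splits as
$$
-\divv\!\left(\frac{\mu}{\theta}\mathcal{J}\right)+\mathcal{J}\cdot\nabla\!\left(\frac{\mu}{\theta}\right).
$$
Plugging in $\mathcal{J}=\gamma\nabla(\mu/\theta)$ identifies the second piece as the squared production $\gamma|\nabla(\mu/\theta)|^2$ and the first as an additional flux contribution $-\gamma(\mu/\theta)\nabla(\mu/\theta)$ to $\Sigma$; together with the Fourier flux this is exactly \eqref{eq:SigmaCH}, and the production is \eqref{eq:sigmaCH}.

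The only genuinely delicate point is the bookkeeping in case (ii): one must notice that the ``integration by parts'' has to be carried out at the level of the identity (not under an integral) in order to read off pointwise $\Sigma$ and $\sigma$, and one must recognise that the specific choice of $\mathcal{J}$ is designed precisely so that the remainder after extracting the divergence is a perfect square and hence a sign-definite production. Case (i) is essentially algebra and case (ii) is a one-line product-rule computation, so I do not anticipate any real obstacle beyond being careful with signs and with which terms belong to $\Sigma$ versus $\sigma$.
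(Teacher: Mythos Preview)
Your proof is correct and is exactly the computation the paper leaves implicit behind its one-word proof ``Immediate.'' You start from the preceding lemma's identity, divide by $\theta$, split the heat term via $\tfrac{1}{\theta}\divv(\beta\nabla\theta)=\divv(\tfrac{\beta}{\theta}\nabla\theta)+\tfrac{\beta}{\theta^2}|\nabla\theta|^2$, and then handle $-\rho j\mu/\theta$ by direct substitution in (i) and by the product-rule splitting in (ii); this is the intended argument. (Note incidentally that your $\Sigma=(\beta/\theta)\nabla\theta$ in part (i) is the correct flux---the extra $\divv$ in the displayed formula of the lemma statement is evidently a typo, as $\Sigma$ must be a vector field entering \eqref{eq:entropy} through $\divv\Sigma$.)
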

\begin{proof} Immediate.
\end{proof}
Equations \eqref{phfm} with interstitial work flux $w$ as in \eqref{def:w}
and $j=j_{AC}, j=j_{CH}$ as in \eqref{jAC},\eqref{jCH} are the NSAC and NSCH equations
proposed in \cite{B1,LT}.\footnote{Though not to \emph{every} detail in the NSCH case, as  
Lowengrub and Truskinovsky seem to either use a law from isothermal diffusion 
or leave constitutive laws in particular for the heat and diffusion fluxes
widely open;
see equations (3.24b), (3.30) in \cite{LT}. The latter is  
very meaningful, cf.\ \cite{RT}.} 
We view the considerations of this subsection certainly not as a new derivation of NSAC 
and NSCH\footnote{Rather must we point out that Remarks 2.1, 2.2, 2.3 in \cite{FKARMA}
are no longer in order as soon as one does take the microforces into account.},
but as perhaps elucidating the role that interstitial work plays in connection with the second
law of thermodynamics also in these two phase-field models.

\section{Reduction to Korteweg models} \label{sec:reduction}
Assume now that a fluid consists of two incompressible phases of
different temperature-independent specific volumes. I.e. (cf.\ \cite{FKARMA}), its 
Gibbs energy has the form
\begin{equation}\label{R1}
G(p,\theta,\chi,\nabla\chi)=T(\chi)p+ W(\theta,\chi,\nabla\chi) 
\quad \hbox{with} \quad  
W(\theta,\chi,\nabla\chi)
=
\check W(\theta,\chi,|\nabla\chi|^2),
\end{equation}
where
\begin{equation}\label{R2}
 T(\chi)=\chi \tau_1+(1-\chi)\tau_2 
\end{equation}
with constants $\tau_1,\tau_2>0$ satisfying 
\begin{equation}\label{R3}
\tau_*\equiv\tau_1-\tau_2\neq 0. 
\end{equation}
\subsection{Reduction of the Navier-Stokes-Allen-Cahn system}
\begin{theorem} \label{thm:nsac}
In the case of two molecularly immiscible incompressible phases of
different, temperature-independent specific volumes, \eqref{R1}, \eqref{R2}, \eqref{R3},
the Navier-Stokes-Allen-Cahn equations \eqref{phfm} with 
$j=j_{AC}$ from \eqref{jAC} and $w$ from \eqref{def:w}
can be written as the Navier-Stokes-Korteweg system
\begin{equation} \label{eq:NSK-2}
\begin{split}
\del{}{t} \rho + \divv ( \rho u ) & = 0, 
\\
\del{}{t}  (\rho u)  + \divv ( \rho u \otimes u )
- 
\divv (-\bar p\mathbf{I}+\mathbf{K}+\mathbf{S}^\epsilon) & = 0,
\\
\del{}{t} \bar{\mathcal{E}} 
+ \divv (\bar{\mathcal{E}}u-
(-\bar p\mathbf{I}+\mathbf{K}+\mathbf{S}^\epsilon) 
u \rot{+\bar w}) 
- \divv ( \beta \nabla \theta )& = 0,
\end{split}
\end{equation}
with $\bar{\mathcal{E}},\bar p,\mathbf{K},\rot{\bar w}$ derived as in 2.3 from the Helmholtz energy 
\begin{equation*} 
\bar F(\theta,\rho,\nabla \rho) = W( \theta, \bar\chi(\rho), [\bar\chi'(\rho)]^2 |\nabla \rho|^2 )
\quad\hbox{with}\quad \bar\chi(\rho) := \frac{1/\rho - \tau_2}{ \tau_*} 
\end{equation*}
and with the modified viscous stress
\begin{equation}
\mathbf{S}^\epsilon = \eta (Du)^s
 + (\zeta+\zeta^\epsilon) \divv u \, \mathbf{I}
\quad\hbox{where}\quad
\zeta^\epsilon\equiv\frac{\epsilon}{\rho \tau_*^2}. \label{eq:S-2}
\end{equation}
\end{theorem}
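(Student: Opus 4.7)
The strategy follows that of \cite{FKARMA}: since \eqref{R1} implies $\tau=\partial_p G=T(\chi)$, the specific volume and hence the density $\rho$ determine $\chi$ through $\chi=\bar\chi(\rho)$, so the order parameter can be eliminated in favour of $\rho$. All $\chi$- and $\nabla\chi$-derivatives are then rewritten via the chain rule as $\rho$- and $\nabla\rho$-derivatives of the Helmholtz energy $\bar F(\theta,\rho,\nabla\rho)=W(\theta,\bar\chi(\rho),\bar\chi'(\rho)\nabla\rho)$, using the key identity $\bar\chi'(\rho)=-1/(\rho^2\tau_*)$. What must be verified beyond \cite{FKARMA} is that the new microforce ingredients, namely $j=j_{AC}$ and the interstitial work flux $w$ from \eqref{def:w}, generate precisely the added bulk viscosity $\zeta^\epsilon$ and the Korteweg-type flux $\bar w$.

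The first step is to read off $j$ from the compatibility of the phase equation with the mass equation. Since $\rho\chi=(1-\rho\tau_2)/\tau_*$, mass conservation gives $\partial_t(\rho\chi)+\divv(\rho\chi u)=(\divv u)/\tau_*$, so the fourth line of \eqref{phfm} forces
\begin{equation*}
j=\frac{\divv u}{\rho\tau_*},
\end{equation*}
and equating with $j_{AC}=-\mu/\epsilon$ yields $\mu=-\epsilon(\divv u)/(\rho\tau_*)$. Using $G_\chi=\tau_* p+W_\chi$ and $G_{\nabla\chi}=W_{\nabla\chi}$, the definition of $\mu$ inverts for the pressure:
\begin{equation*}
-p\mathbf{I}=\frac{1}{\tau_*}\bigl[W_\chi-\rho^{-1}\divv(\rho W_{\nabla\chi})\bigr]\mathbf{I}+\zeta^\epsilon(\divv u)\mathbf{I},\qquad \zeta^\epsilon=\frac{\epsilon}{\rho\tau_*^2},
\end{equation*}
and the second summand is exactly the additional contribution to $\mathbf{S}^\epsilon$ in \eqref{eq:S-2}.

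Next I would identify the first summand together with the Ericksen tensor $\mathbf{C}=-\rho\nabla\chi\otimes W_{\nabla\chi}$ as $-\bar p\mathbf{I}+\mathbf{K}$. Since $\nabla\chi=\bar\chi'(\rho)\nabla\rho$ and $W_{\nabla\chi}=2\check W_{|\nabla\chi|^2}\bar\chi'(\rho)\nabla\rho$, the chain rule gives $\partial_{\nabla\rho}(\rho\bar F)=2\rho[\bar\chi'(\rho)]^2\check W_{|\nabla\chi|^2}\nabla\rho$, so $-\rho\nabla\chi\otimes W_{\nabla\chi}=-\nabla\rho\otimes\partial_{\nabla\rho}(\rho\bar F)$, which is the tensorial part of $\mathbf{K}$ in \eqref{eq:K}. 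The matching of the scalar parts with $\rho\,\divv(\partial_{\nabla\rho}(\rho\bar F))\mathbf{I}-\bar p\mathbf{I}$, $\bar p$ being the Korteweg pressure derived from $\bar F$, is pure chain rule and is the very computation carried out in \cite{FKARMA}. For the interstitial work flux, $w=-\rho j\,W_{\nabla\chi}=-(\divv u/\tau_*)\,W_{\nabla\chi}$; a direct check using $\rho\,\partial_{\nabla\rho}\bar F=\kappa\nabla\rho$ together with $\bar\chi'(\rho)=-1/(\rho^2\tau_*)$ then gives $w=\kappa\rho(\divv u)\nabla\rho=\bar w$.

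The main potential obstacle is the scalar-part identification above, but it is purely algebraic once the formulas for $\mu$ and $\bar\chi'$ are in hand, and coincides with the microforce-free reduction of \cite{FKARMA}. Once the momentum stress is rewritten as $-\bar p\mathbf{I}+\mathbf{K}+\mathbf{S}^\epsilon$, the energy flux $(\mathcal{E}\mathbf{I}-\mathbf{T})u+w$ becomes $(\bar{\mathcal{E}}\mathbf{I}-\bar{\mathbf{T}})u+\bar w$ under the same substitutions (noting that $G=\tau p+W$ under \eqref{R1} gives $E=W+\theta s=\bar F+\theta s=\bar E$), the mass equation is unchanged, and the phase equation has already been consumed in the first step. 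This delivers \eqref{eq:NSK-2}.
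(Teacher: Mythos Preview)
Your proposal is correct and follows essentially the same route as the paper's proof. Both argue that \eqref{R1} forces $\chi=\bar\chi(\rho)$, read off $\rho j=(\divv u)/\tau_*$ from the phase equation, use the Allen-Cahn relation $\mu=-\epsilon j$ to solve for $p$ (producing the extra bulk viscosity $\zeta^\epsilon$), and then convert $-p\mathbf I+\mathbf C$ into $-\bar p\mathbf I+\mathbf K$ and $w$ into $\bar w$ by the chain-rule identities $\nabla\chi=\bar\chi'(\rho)\nabla\rho$, $W_{\nabla\chi}=[\bar\chi'(\rho)]^{-1}\partial_{\nabla\rho}\bar F$; the only cosmetic difference is that the paper records the scalar-part identity explicitly as its equation \eqref{eq:12}, whereas you defer that algebra to \cite{FKARMA}.
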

\begin{proof} 
In this situation, the concentration $\chi$ and the total density $\rho$ are linked through 
 \begin{equation} \label{eq:hat-c}
\chi = \bar{\chi}(\rho), 
\end{equation}
and the bahvior of the concentration is described by
\begin{equation}\label{jisdivu}
\rho j=\del{}{t} (\rho \chi) + \divv ( \rho \chi u) = \rho \dot{\chi} = \rho \bar{\chi}'(\rho) \dot{\rho} 
= \frac{1}{\tau_*} \divv u. 
\end{equation}
Very similarly to \cite{FKARMA}, we have 
\begin{equation} 
\begin{split}
\del{}{\nabla \chi}G  & = \del{}{\nabla \chi}W 
= \frac{1}{ \bar\chi'(\rho) } \del{}{\nabla \rho}\bar{F}, 
\quad \nabla \chi = \bar\chi'(\rho) \nabla \rho.
\end{split}  
\end{equation}
Using \eqref{def:w} and \eqref{jisdivu}, this implies
\begin{align}
\label{wisbarw}
w&= -\rho \dot{\chi} \, \del{}{ \nabla \chi} G 
=  \rho^2 (\divv u) \, \bar{\chi}'(\rho) \, \del{}{ \nabla \chi} G
= \rho^2 (\divv u) \, \del{}{ \nabla \rho}\bar{F} 
= {\kappa}\rho (\divv u) \, \nabla\rho 
=\bar w. 
\end{align}
Furthermore, 
\begin{equation} 
\begin{split}
\del{}{\chi} G & = \tau_* p + \del{}{\chi} W
\end{split}  
\end{equation}
and
\begin{equation} \label{eq:12}
\frac{1}{\tau_*} \del{}{\chi}  W 
  = - \rho^2 \del{}{\rho} \bar F  
+ \rho^2 \frac{ \bar\chi''(\rho) }{ \bar\chi'(\rho) } \nabla \rho \cdot \del{}{\nabla \rho} \bar F,
\end{equation}
imply that 
\begin{align}
\label{mu}
\rho\mu&=-\bigg( \divv \bigg({\rho}\del{}{\nabla \chi}   G  \bigg) 
- {\rho}\del{}{\chi}  G  \bigg)
= 
-\bigg( \divv \bigg(  [ \bar\chi'(\rho) ]^{-1} \del{}{\nabla \rho} 
\bigg({\rho}\bar{F} \bigg) \bigg)  
- 
{\rho}\tau^* p 
- 
{\rho} \del{}{\chi} W  \bigg).
\end{align}
This allows to represent the pressure as
\begin{equation}\label{eq:p}
p = \frac{ \mu}{ \tau_*} \divv u 
- \rho \bar\chi'(\rho) \divv \left(  [ \bar\chi'(\rho) ]^{-1} \del{}{\nabla \rho} 
({\rho}\bar{F} ) \right) 
- \frac{1}{ \tau_*} \del{}{ \chi } W.
\end{equation}
Combining \eqref{eq:p} and \eqref{jisdivu} 
with the Allen-Cahn relation \eqref{jAC} 
gives
\begin{equation}\label{eq:pAC}
p =  - \frac{ \epsilon }{ \tau_*^2 \rho } \divv u 
- \rho \bar\chi'(\rho) \divv \left(  [ \bar\chi'(\rho) ]^{-1} \del{}{\nabla \rho} 
({\rho}\bar{F} ) \right) 
- \frac{1}{ \tau_*} \del{}{ \chi } W.
\end{equation}
Using this, we get
\begin{align*}
-p\mathbf{I}+\mathbf{C}+\mathbf{S} 
& = 
 - p \,  \mathbf{I} - \nabla \chi \otimes \del{}{\nabla \chi} ( \rho W)
+\mathbf{S}
\\
& = \frac{1}{ \tau_*} \del{}{\chi} W  \, \mathbf{I} 
+ 
\rho \bar\chi'(\rho) \divv \left(  [ \bar\chi'(\rho) ]^{-1} 
\del{}{\nabla \rho} ({\rho} \bar{F} ) \right) \, \mathbf{I} 
-
\nabla \rho \otimes \del{}{\nabla \rho} ( \rho \bar F )
+\mathbf{S}^\epsilon
\\
& = \frac{1}{ \tau_*} \del{}{\chi} W  \, \mathbf{I} 
+ 
\rho \divv \left(  \del{}{\nabla \rho} ({\rho}\bar{F} ) \right) \, \mathbf{I} 
-
\rho^2 \frac{ \bar\chi''(\rho) }{ \bar\chi'(\rho) } \nabla \rho \cdot \del{}{ \nabla \rho} \bar{F} 
-
\nabla \rho \otimes \del{}{\nabla \rho} ( \rho \bar F )+\mathbf{S}^\epsilon,
\end{align*}
where we have used \eqref{eq:hat-c} and \eqref{eq:S-2}. 
Replacing now the term $\del{}{ \chi } W$ according to the identity \eqref{eq:12},
we indeed find
\begin{equation}\label{CisK}
\begin{aligned}
-p\mathbf{I}+\mathbf{C}+\mathbf{S} 
& =
-\bar p \mathbf{I}  
+ 
\bigg[ \rho \divv \left(  \del{}{\nabla \rho} \bigg({\rho}\bar{F} \bigg) \right) \, \mathbf{I} 
- 
\nabla \rho \otimes \del{}{\nabla \rho} \bigg(\rho \bar F \bigg) \bigg] 
+\mathbf{S}^\epsilon
\\
& =
-\bar p\mathbf{I}+\mathbf{K}+\mathbf{S}^\epsilon
\end{aligned}
\end{equation}
with the Korteweg tensor $\mathbf{K}$ as defined in \eqref{eq:K}.
\end{proof}
As a consistency check, we note that in the situation of Theorem 3.1,
the entropy production rate 
\eqref{eq:sigmaAC} agrees with what \eqref{eq:sigma} becomes upon replacing 
$\zeta$ with $\zeta+\zeta^\epsilon$:  
\begin{equation*}
\frac{\epsilon\rho}{\theta}j_{AC}^2
= \frac{\epsilon}{ \rho \theta \tau_*^2} |\divv u|^2 
=\frac{\zeta^\epsilon}{\theta} |\divv u|^2. 
\end{equation*}


\subsection{Reduction of the Navier-Stokes-Cahn-Hilliard system}
\begin{theorem} \label{thm:nsch}
In the case of two molecularly immiscible incompressible phases of
different, temperature-independent specific volumes, \eqref{R1}, \eqref{R2}, \eqref{R3},
the Navier-Stokes-Allen-Cahn equations \eqref{phfm} with 
$j=j_{CH}$ from \eqref{jCH} and $w$ from \eqref{def:w}
can be written as the Navier-Stokes-Korteweg system
\begin{equation} \label{eq:NSK-3}
\begin{split}
\del{}{t} \rho + \divv ( \rho u ) & = 0, 
\\
\del{}{t}  (\rho u)  + \divv ( \rho u \otimes u )
- 
\divv (-\bar p\mathbf{I}+\mathbf{K}+\mathbf{S}_\gamma) & = 0,
\\
\del{}{t} \bar{\mathcal{E}} 
+ \divv (\bar{\mathcal{E}}u-
(-\bar p\mathbf{I}+\mathbf{K}+\mathbf{S}_\gamma) 
u \rot{+\bf w}) 
- \divv ( \beta \nabla \theta )& = 0,
\end{split}
\end{equation}
with $\bar{\mathcal{E}},\bar p,\mathbf{K},\rot{\bar w}$ derived as in 2.3 from the Helmholtz energy 
\begin{equation*}
\bar F(\theta,\rho,\nabla \rho) = W( \theta, \bar\chi(\rho), [\bar\chi'(\rho)]^2 |\nabla \rho|^2 )
\quad\hbox{with}\quad \bar\chi(\rho) := \frac{1/\rho - \tau_2}{ \tau_*} \label{eq:PSI-2}  
\end{equation*}
and with the modified viscous stress
\begin{equation} \label{eq:S-3}
\mathbf{S}_\gamma = 
\eta (Du)^s + \zeta\divv u \, \mathbf{I} 
+ 
\frac{\theta}{\tau^2_*} \Lambda_\gamma( \divv u) \mathbf{I}, 
\end{equation}
where $\Lambda_\gamma$ denotes the solution operator of the elliptic problem
\begin{equation*}
- \divv ( \gamma \nabla \phi) = \divv u \quad \hbox{on } \Omega.   
\end{equation*}
\end{theorem}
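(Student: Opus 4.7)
The plan is to mirror the proof of Theorem~\ref{thm:nsac}, replacing the algebraic Allen--Cahn closure by the elliptic Cahn--Hilliard closure and tracking how the resulting nonlocal expression for $\mu$ propagates into the pressure and hence into the stress.

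Since the hypotheses \eqref{R1}--\eqref{R3} are identical to those of Theorem~\ref{thm:nsac}, every step in its proof that does \emph{not} use the concrete form of $j$ carries over unchanged: the chain rules $\chi=\bar\chi(\rho)$, $\nabla\chi=\bar\chi'(\rho)\nabla\rho$, and $\del{}{\nabla\chi}G=[\bar\chi'(\rho)]^{-1}\del{}{\nabla\rho}\bar F$; the identity $\rho j=\tfrac{1}{\tau_*}\divv u$ of \eqref{jisdivu}; the equality $w=\bar w$ in \eqref{wisbarw}, whose derivation uses only \eqref{def:w} and \eqref{jisdivu}; and the chemical-potential identity \eqref{mu} together with the pressure representation \eqref{eq:p}.

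The new ingredient is \eqref{jCH}. Equating $\rho j_{CH}=\divv\!\bigl(\gamma\nabla(\mu/\theta)\bigr)$ with $\rho j=\tfrac{1}{\tau_*}\divv u$ yields the elliptic equation
\begin{equation*}
-\divv\!\bigg(\gamma\nabla\bigg(-\frac{\tau_*\mu}{\theta}\bigg)\bigg)=\divv u,
\end{equation*}
so by the defining property of $\Lambda_\gamma$ one obtains the nonlocal closure $\mu=-\tfrac{\theta}{\tau_*}\Lambda_\gamma(\divv u)$. Substituting this into \eqref{eq:p} and eliminating $\del{}{\chi}W$ by \eqref{eq:12} exactly as before, the computation that produced \eqref{CisK} goes through verbatim; the only change is that the $\mu$-linear contribution to $-p\mathbf{I}$ now reads $+\tfrac{\theta}{\tau_*^2}\Lambda_\gamma(\divv u)\mathbf{I}$, which is precisely the extra bulk-viscosity term absorbed into $\mathbf{S}_\gamma$ in \eqref{eq:S-3}. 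Combined with $w=\bar w$, this yields \eqref{eq:NSK-3}.

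The only substantive obstacle compared with the Allen--Cahn case is this nonlocality: $\mu$ is no longer a pointwise multiple of $\divv u$ but is fixed by an elliptic boundary-value problem. That is why the additional bulk-viscosity term in \eqref{eq:S-3} is the \emph{operator} $\Lambda_\gamma$ applied to $\divv u$ rather than an algebraic multiple of it, and the entire reduction goes through as soon as the elliptic problem $-\divv(\gamma\nabla\phi)=\divv u$ is well-posed on $\Omega$ with appropriate boundary data.
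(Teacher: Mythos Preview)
Your proof is correct and follows essentially the same approach as the paper's own proof: reuse all identities \eqref{eq:hat-c}--\eqref{eq:p} from the proof of Theorem~\ref{thm:nsac}, invoke the Cahn--Hilliard relation \eqref{jCH} together with \eqref{jisdivu} to obtain $\mu=-\tfrac{\theta}{\tau_*}\Lambda_\gamma(\divv u)$, and substitute into \eqref{eq:p} so that the $\mu$-term becomes the nonlocal bulk-viscosity contribution absorbed into $\mathbf{S}_\gamma$. Your presentation is in fact slightly more explicit than the paper's in spelling out the elliptic equation satisfied by $-\tau_*\mu/\theta$ and in flagging the well-posedness requirement on $\Lambda_\gamma$.
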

\begin{proof}
The only difference from the proof of Theorem \ref{thm:nsac}
lies in the form of the representation for the  pressure $p$.
We keep using identities \eqref{eq:hat-c} through \eqref{eq:p}.
As the Cahn-Hilliard relation \eqref{jCH} means that 
\begin{equation}
\label{muintermsofdivu}
\mu=-\frac\theta{\tau_*}\Lambda_\gamma(\divv u),
\end{equation}
identity \eqref{eq:p} now yields 
\begin{equation}\label{eq:pCH}
\begin{split}
p & =  
-\frac\theta{\tau_*^2}\Lambda_\gamma(\divv u)
- \rho \bar\chi'(\rho) \divv \left(  [ \bar\chi'(\rho) ]^{-1} \del{}{\nabla \rho} 
({\rho}\bar{F} ) \right) 
- \frac{1}{ \tau_*} \del{}{ \chi } W
\end{split}
\end{equation}
instead of \eqref{eq:p}. This leads to 
\begin{equation}\label{CisKgamma}
\begin{aligned}
-p\mathbf{I}+\mathbf{C}+\mathbf{S} 
& =
-\bar p\mathbf{I}+\mathbf{K}+\mathbf{S}_\gamma
\end{aligned}
\end{equation}
instead of \eqref{CisK}.
\end{proof}
For a confirmation, set
$$
\phi=\Lambda_\gamma(\divv u),
$$
note that for 
\eqref{eq:NSK-3}, the entropy balance
\eqref{eq:entropy} holds with
\begin{equation*}
\Sigma\equiv
\frac\beta\theta \nabla \theta-\frac\gamma{\tau_*^2}\phi\nabla\phi
\end{equation*}
and
\begin{equation*}
\sigma 
\equiv
\frac{1}{\theta} \bigg( \eta(Du)^s:(Du)^s + \zeta (\divv u)^2 \bigg) 
+
\frac{\beta}{\theta^2} |\nabla\theta|^2.
+
\frac\gamma{\tau_*^2}|\nabla\phi|^2
\end{equation*}
instead of \eqref{eq:Sigma} and  \eqref{eq:sigma}, 
and observe that in the present situation \eqref{muintermsofdivu} implies
that 
$$
\phi=-\tau_*\frac\mu\theta
$$
in accordance with \eqref{eq:SigmaCH}  and \eqref{eq:sigmaCH}.

\section{Spatiotemporal objectivity}
In \cite{FKARMA}, arguing against the interstitial work
that was so ingeniously discovered by Dunn and Serrin in \cite{DS},
we claimed that using the corresponding term in the energy equation  
``would violate the fundamental requirement that 
the contributions of a stress tensor $\mathbf{T}$
to momentum and energy balance are related as $\divv \mathbf{T}$ 
and $\divv (\mathbf{T}u)$'' (p.\ 10). However, 
instead of being fundamental, this `requirement' is fundamentally wrong --  in particular it 
is \emph{not} dictated by the principle of objectivity; see below.
Regarding NSK, besides by the persuasiveness of
Dunn and Serrin's argumentation based on the Clausius-Duhem inequality,   
we should have felt alarmed by the fact that  Benzoni-Gavage et al.\ in \cite{BDDJ}
had already identified a Hamiltonian structure for the Euler-Korteweg equations 
that confirmed Dunn and Serrin's formulation.
Regarding NSAC and NSCH, the equally solid derivations 
by Blesgen and Lowengrub and Truskinovsky were other signs of warning.

Still, we noticed the inappropriateness\footnote{at the level of physics --- 
it remains amazing to us that the NSAC-NSCH-NSK triple stays as 
consistent as shown in \cite{FKARMA} at the mathematical level, when one suppresses the interstitial work 
in all three models!} of the above `requirement' only 
when we began\footnote{partly paralleling work
on dissipation \cite{FT2}}
to formulate relativistic versions of NSK, NSAC, and NSCH. Also 3-tensors that reflect purely 
mechanical stresses must respect \emph{spatiotemporal objectivity}. 
In particular, the Korteweg tensor occuring in NSK and the phase transition rate / interphase
diffusion flux figuring as $j$ in NSAC and NSCH come from relativistically
covariant tensorial quantities that derive from energies which depend, objectively in the sense of Rational 
Mechanics, on the spatiotemporal 4-gradient of density/concentration, and interstitial work is a 
natural implication of this property. Related details can be 
found in \cite{F1,F2}.

\par\medskip

\footnotesize


\begin{thebibliography}{99}
\bibitem{BDDJ} 
\textsc{S. Benzoni-Gavage, R. Danchin, S.  Descombes, D. Jamet}: 
Structure of Korteweg models and stability of diffuse interfaces. 
\textit{Interfaces Free Bound.}\ \textbf{7} (2005), 371--414.

\bibitem{B1} 
\textsc{T. Blesgen}: A generalisation of the Navier-Stokes equations to
two-phase-flows. \textit{J. Phys. D: Appl. Phys.} \textbf{32} (1999), 1119--1123.
\bibitem{DS} 
\textsc{J. E. Dunn, J. Serrin}: On the thermomechanics of interstitial 
working. \textit{Arch.\ Rational Mech.\ Anal.} \textbf{88} (1985), 95--133.

\bibitem{F1} 
\textsc{H. Freist\"uhler}: 
A relativistic version of the Euler-Korteweg equations. Submitted.

\bibitem{F2} 
\textsc{H. Freist\"uhler}: 
Spatiotemporally objective descriptions of two-phase fluid flows.
Manuscript.
\bibitem{FKARMA}
\textsc{H. Freist\"uhler, M. Kotschote}: 
Phase-field and Korteweg-type models for the time-dependent flow 
of compressible two-phase fluids.
\textit{Arch.\ Rational Mech.\ Anal.} \textbf{224} (2017), 1-20.
\bibitem{FT2} 
\textsc{H. Freist\"uhler, B. Temple}: Causal dissipation for the relativistic dynamics of ideal gases. 
\textit{Proc.\ Roy.\ Soc.\ A} {\bf 473} (2017), 20160729, 20 pp.
\bibitem{GPV} \textsc{M. E. Gurtin, D. Polignone, J. Vi\~nals}:
\selectlanguage{english} 
Two-phase binary fluids and immiscible fluids described by an order parameter.
\textit{Math. Models Methods Appl. Sci.} \textbf{6} (1996), 815--831.
\bibitem{Korteweg}
\textsc{D.J. Korteweg}: Sur la forme que prennent les \'{e}quations
des mouvements des fluides si l'on tient compte des forces capillaires par des 
variations de densit\'{e}. 
\textit{Arch. N\'{e}er.\ Sci.\ Exactes S\'{e}r.\ II} {\bf 6} (1901), 1-24.
\bibitem{LT} 
\textsc{J. Lowengrub, L. Truskinovsky}:
Quasi-incompressible Cahn-Hilliard fluids and topological transitions.
\textit{Proc.\ R. Soc.\ Lond.\ A} \textbf{454} (1998), 2617--2654.
\bibitem{RT} 
\textsc{A. Roshchin, L. Truskinovsky}:
Model of a weakly non-local relaxing compressible medium.
\textit{J.\ Appl.\ Math.\ Mech.} \textbf{53} (1989), 904--910.

\bibitem{S} 
\textsc{M. Slemrod}:
Admissibility criteria for propagating phase boundaries in 
a van der Waals fluid. \textit{Arch.\ Rational Mech.\ Anal.} {\bf 81} (1983), 
301--315.
\end{thebibliography}
\end{document}